\newtheorem{theorem}{Theorem}[section]
\newtheorem{lemma}[theorem]{Lemma}
\newtheorem{problem}[theorem]{Problem}
\newcounter{maintheorem}
\theoremstyle{remark}
\theoremstyle{definition}
\newtheorem{example}[theorem]{Example}
\numberwithin{equation}{section}
\newcommand{\R}{\mathbb{R}}
\newcommand{\nn}[1]{{\left\vert\kern-0.25ex\left\vert\kern-0.25ex\left\vert #1
\right\vert\kern-0.25ex\right\vert\kern-0.25ex\right\vert}}
\renewcommand{\leq}{\leqslant}
\renewcommand{\geq}{\geqslant}
\newcounter{smallromans}
\begin{document}
\title[]{ Moduli of continuity and absolute continuity: \\ any relation?}

\author[M.~Muratori]{Matteo Muratori}
\address[M.~Muratori]{Politecnico di Milano, Dipartimento di Matematica, Piazza Leonardo da Vinci 32, 20133 Milano, Italy}
\email{matteo.muratori@polimi.it}

\author[J.~Somaglia]{Jacopo Somaglia}
\address[J.~Somaglia]{Politecnico di Milano, Dipartimento di Matematica, Piazza Leonardo da Vinci 32, 20133 Milano, Italy}
\email{jacopo.somaglia@polimi.it}

\keywords{Absolute continuity; modulus of continuity; Cantor function.}

\subjclass[2020]{Primary: 26A15. Secondary: 26A30; 26A46; 26A48}


\begin{abstract}
We construct a monotone, continuous, but not absolutely continuous function whose minimal modulus of continuity is absolutely continuous. In particular, we establish that there is no equivalence between the absolute continuity of a function and the absolute continuity of its modulus of continuity, in contrast with a well-known property of Lipschitz functions.
\end{abstract}
\maketitle


\section{Introduction}

For a continuous real function $f$ defined on a compact interval $[a,b]$, it is well defined its modulus of continuity
\begin{equation}\label{def-mod}\tag{$\mathsf{mod}$}
\omega_f(\delta)=\max\{|f(x)-f(y)|\colon \ x,y\in [a,b] \, , \ |x-y|\leq\delta\} \qquad \forall \delta \in [0,b-a] \, ,
\end{equation}
which is in turn a nondecreasing continuous function. Note that $ \omega_f $ is usually referred to as the \emph{minimal} modulus of continuity; this will be tacitly assumed throughout the paper when we write $ \omega_f $. Of course, it is possible to relax \eqref{def-mod} upon replacing ``='' with ``$\geq$'', in which case one simply says that $\omega_f$ is \emph{a} modulus of continuity for $f$. However, it is plain that the connection between $f$ and $\omega_f$ becomes weaker. For example, one can always construct a modulus of continuity for $f$ which is \emph{concave} (in particular absolutely continuous), see e.g.~\cite[Lemma 11]{Bre}.

\smallskip

It follows directly from definition \eqref{def-mod} that a function $f$ is  Lipschitz on $[a,b]$ if and only its modulus of continuity $\omega_f$ is also Lipschitz. It is natural to ask whether other types of regularity properties are preserved passing from $f$ to its modulus of continuity $\omega_f$ or vice versa. For instance, in \cite{DP} it has been proved that under suitable conditions the modulus of continuity of a piecewise real-analytic function is analytic at $ \delta=0 $. Here we are interested in \emph{absolute continuity}. It is not hard to show that there exists a continuous function, which is not absolutely continuous, whose modulus of continuity is absolutely continuous (see Example \ref{e: nonmonotona} below). However, the question becomes much more subtle if one assumes that the function $f$ is in addition \emph{monotone}. In this regard, it is interesting to notice that the modulus of continuity of the \emph{Cantor function} is the Cantor function itself, see e.g.~\cite[Proposition 3.2]{DMRV} or \cite{D}. Hence, in this particular case, both the functions $f$ and $\omega_f$ are not absolutely continuous. 

\smallskip 

Elaborating on some further key properties of the Cantor function, in the present note we exhibit a monotone, continuous, not absolutely continuous function whose modulus of continuity is absolutely continuous. In other words, we show that, in general, the absolute continuity of $\omega_f$ does not imply the absolute continuity of $f$, not even if $f$ is monotone. We stress that this result is in sharp contrast with the situation in the Lipschitz setting where, as recalled above, the Lipschitz continuity of $f$ is \emph{equivalent} to the Lipschitz continuity of $ \omega_f $ (in fact with the same Lipschitz constant).

\section{Main result and proof}

Our goal is to prove the following result.

\begin{theorem}\label{t: mainthm}
	There exists a nondecreasing continuous function $f\colon [0,7]\to [0,7]$ such that: 
	\begin{enumerate}
		\item $f$ is not absolutely continuous; 
 \item its minimal modulus of continuity $\omega_f$ is absolutely continuous.
	\end{enumerate}
\end{theorem}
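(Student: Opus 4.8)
The plan is to build $f$ as a superposition of countably many rescaled copies of the Cantor function $c$, exploiting three facts recalled in the introduction (and standard for $c$): that $\omega_c=c$, that $c$ is self-similar, so that the maximal rise of a copy of height $h$ and width $a$ over a subinterval of length $\delta\le a$ equals exactly $h\,c(\delta/a)$, and that $c$ is singular with $c'=0$ a.e. Since a countable union of Lebesgue-null sets is null, a function obtained by juxtaposing rescaled Cantor functions again has zero derivative almost everywhere while still increasing; this makes $f$ singular, hence not absolutely continuous, which settles item (1) essentially for free. All the effort goes into arranging the copies so that item (2) holds.

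Concretely, I would fix widths $a_n>0$ with $\sum_n a_n=7$ and heights $h_n>0$ with $\sum_n h_n=7$, and place on the $n$-th consecutive subinterval of $[0,7]$ a copy of $c$ rescaled to width $a_n$ and height $h_n$; the resulting $f\colon[0,7]\to[0,7]$ is automatically nondecreasing, continuous, and onto. The design principle is to choose the pairs $(a_n,h_n)$ and their left-to-right order so that the maximal increment over a window of length $\delta$ is never realized by a window sitting inside a single copy, but rather by a window straddling many copies. The point is that a straddling window captures a \emph{sum} of partial rises, and the maximization over its placement is a (finite or infinite) sup-convolution of rescaled Cantor profiles. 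Sup-convolution regularizes: whereas a single profile $h_n c(\delta/a_n)$ is singular in $\delta$ (its graph is flat on a full-measure set of $\delta$), combining sufficiently many profiles additively fills in these flat steps and pushes the envelope toward its concave hull.

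Thus the heart of the proof is the explicit computation, or a sharp two-sided estimate, of $\omega_f(\delta)=\sup_{x}\big(f(x+\delta)-f(x)\big)$, followed by the verification that it is absolutely continuous. I would split windows into those contained in a single copy and those meeting at least two, show that for the chosen scales the second family always dominates, and identify the resulting maximal-increment function. The cleanest target is to arrange the scales so that $\omega_f$ comes out \emph{concave} on $[0,7]$: a concave function vanishing at $0$ is subadditive and absolutely continuous (this is precisely the regularity used in \cite[Lemma 11]{Bre} for non-minimal moduli), so concavity of the minimal modulus would give item (2) at once. Concavity is plausible because $\omega_f'(0^+)=+\infty$, which is forced by singularity, is compatible with a decreasing derivative, exactly as for $\delta\mapsto\delta^{\alpha}$ with $\alpha=\log 2/\log 3$, the Hölder exponent of $c$.

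The delicate points I expect are: (i) ruling out that a window inside the \emph{widest} copies produces a non-absolutely-continuous contribution $h_n c(\delta/a_n)$ at small scales $\delta\le a_n$, which requires the straddling windows to dominate for \emph{every} $\delta$, hence a careful scale-by-scale comparison; and (ii) controlling the sup-convolution of infinitely many Cantor profiles sharply enough to prove that the limiting envelope has no singular part, i.e.\ that $\omega_f(\delta)=\int_0^\delta \omega_f'$ holds. Establishing (ii) rigorously — most likely by producing an explicit concave (or piecewise-affine, countably-many-pieces) minorant for $\omega_f$ together with a matching upper bound — is where the real difficulty lies, and the specific choice of the domain $[0,7]$ presumably reflects the bookkeeping needed to make these two estimates close exactly.
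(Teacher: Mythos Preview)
Your plan leaves open precisely the step you flag as ``where the real difficulty lies,'' and I do not see a mechanism to close it within your framework. If $f$ is assembled \emph{only} from rescaled Cantor pieces, then every candidate for $\omega_f(\delta)$ --- whether a single-copy window contributing $h_n\,c(\delta/a_n)$ or a straddling window contributing a sup-convolution $\sup_s\big[h_m c(s/a_m)+h_n c((\delta-s)/a_n)+\cdots\big]$ --- is built entirely from Cantor staircases. You assert that sup-convolution ``regularizes'' and hope the envelope becomes concave, but you give no argument; the concave hull of $c$ is $\delta^\alpha$, not $c$, and nothing in your construction forces $\omega_f$ to equal its concave hull rather than some intermediate staircase. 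With finitely many pieces this appears to fail outright (for small $\delta$ the widest copy dominates, and sup-convolving two staircases of the same H\"older exponent does not fill their flats), and with infinitely many pieces you would need a delicate limiting argument that you have not supplied.

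The paper's route is different and much simpler: it uses a \emph{single} Cantor block on one unit subinterval of $[0,7]$, surrounded by six smooth blocks built from $f_2(x)=x^\alpha$ (with $\alpha=\log 2/\log 3$) and its reflection $f_3(x)=1-f_2(1-x)$. The standard H\"older bound $c\le f_2$ and the symmetric bound $f_3\le c$ then allow an explicit case-by-case check that every window touching the Cantor block can be translated to a window lying entirely among the smooth blocks with at least as large an increment. Hence the Cantor block is invisible to the modulus: $\omega_f=\omega_g$, where $g$ is the piecewise-$C^1$ function obtained by replacing the Cantor block by the identity. A direct finite optimization then yields $\omega_g$ explicitly as a piecewise-$C^1$ function, hence absolutely continuous. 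The seven unit blocks are exactly the bookkeeping needed for the translation argument to cover every possible window position --- not, as you guessed, a count of Cantor scales.
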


In order to prove the above theorem, we need a few technical lemmas. First of all, let us introduce some basic notations. We denote by $f_1\colon[0,1]\to [0,1]$ the standard Cantor function (see \cite{DMRV} for a wide survey on such a topic) and by $I\colon \R\to \R$ the identity map. If $f\colon[a,b]\to \mathbb{R}$ is a continuous function, then its modulus of continuity is denoted by $ \omega_f : [0,b-a] \to \R^+ $ and defined according to \eqref{def-mod}. On the other hand, it is apparent that for a \emph{nondecreasing} continuous function it holds 
$$
\omega_f(\delta)=\max\{|f(x)-f(y)|\colon \ x,y\in [a,b] \, , \ |x-y| = \delta\} \qquad \forall \delta \in [0,b-a] \, ,
$$
a property that we will exploit systematically.

\smallskip 

As recalled above, the Cantor function is the minimal modulus of continuity of itself. Nevertheless, there exists a much more regular (and explicit) function that can be taken as a larger modulus of continuity.

\begin{lemma}\label{l: concave}
	The function $f_2\colon[0,1]\to[0,1]$, defined by $f_2(x)=x^\alpha$ with $\alpha={\frac{\log(2)}{\log(3)}}$, satisfies the following conditions:
	\begin{enumerate}
		\item\label{c1} $f_2(0)=0$ and $f_2(1)=1$;
		\item\label{i: dominataconcava} $f_1(x)\leq f_2(x)$ for every $x\in [0,1]$;
		\item\label{i: migliormodulo} $|f_1(x)-f_1(y)|\leq f_2(|x-y|)$ for every $x,y\in [0,1]$;
		\item\label{i: identity1} $I(x)\leq f_2(x)$ for every $x\in [0,1]$;
		\item\label{i: identity2} $|I(x)-I(y)|\leq f_2(|x-y|)$ for every $x,y\in [0,1]$.
	\end{enumerate}
\end{lemma}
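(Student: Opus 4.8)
The plan is to exploit one arithmetic coincidence: since $\alpha=\log(2)/\log(3)$ we have $3^{-\alpha}=\tfrac12$, so $f_2(x/3)=(x/3)^\alpha=3^{-\alpha}x^\alpha=\tfrac12 f_2(x)$ for every $x\in[0,1]$. Thus $f_2$ obeys \emph{exactly} the same self-similar scaling law as the Cantor function, $f_1(x/3)=\tfrac12 f_1(x)$ on $[0,1]$, and in particular $f_2(1/3)=(1/3)^\alpha=\tfrac12=f_1(1/3)$. Condition \eqref{c1} is immediate from $0^\alpha=0$ and $1^\alpha=1$.

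For the two conditions involving the identity I would note that $0<\alpha<1$ forces $t=t^{1-\alpha}\,t^\alpha\le t^\alpha$ for every $t\in[0,1]$, because $t^{1-\alpha}\le 1$ there. Taking $t=x$ gives $I(x)=x\le x^\alpha=f_2(x)$, which is \eqref{i: identity1}; taking $t=|x-y|\in[0,1]$ gives $|I(x)-I(y)|=|x-y|\le|x-y|^\alpha=f_2(|x-y|)$, which is \eqref{i: identity2}. Note that the latter contains the former as the special case $y=0$.

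The heart of the matter is the domination $f_1\le f_2$, i.e.\ \eqref{i: dominataconcava}. I would argue by contradiction on the global maximum $M:=\max_{[0,1]}(f_1-f_2)$, which satisfies $M\ge(f_1-f_2)(0)=0$. Assuming $M>0$, let $x_0$ be a maximizer; then $x_0\in(0,1)$. Using the three defining relations of the Cantor function I would rule out each location of $x_0$. If $x_0\in[1/3,2/3]$ then $f_1(x_0)=\tfrac12\le(1/3)^\alpha\le f_2(x_0)$, so $M\le 0$. If $x_0\in(0,1/3)$ the matching scaling laws give $M=(f_1-f_2)(x_0)=\tfrac12(f_1-f_2)(3x_0)\le\tfrac12 M$, forcing $M\le 0$. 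The only genuinely delicate case---and the main obstacle---is the right third $x_0\in(2/3,1)$. Writing $u=3x_0-2\in(0,1)$ and $f_1(x_0)=\tfrac12+\tfrac12 f_1(u)$, and using $f_1(u)\le f_2(u)+M$, I obtain
\[
M \;\le\; 1+u^\alpha-2\Big(\tfrac{u+2}{3}\Big)^{\!\alpha} \;=\; 1+u^\alpha-(u+2)^\alpha ,
\]
where the last equality again uses $2\cdot 3^{-\alpha}=1$. It therefore remains to verify the scalar inequality $(u+2)^\alpha\ge 1+u^\alpha$ on $[0,1]$, which closes the argument by giving $M\le 0$. I would check it by setting $h(u)=(u+2)^\alpha-u^\alpha-1$ and observing that $h(1)=3^\alpha-2=0$, $h(0)=2^\alpha-1>0$, while $h'(u)=\alpha\big[(u+2)^{\alpha-1}-u^{\alpha-1}\big]<0$ since $\alpha-1<0$ makes $t\mapsto t^{\alpha-1}$ decreasing; hence $h\ge 0$ on $[0,1]$. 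This yields $M\le 0$, that is $f_1\le f_2$.

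Finally, for \eqref{i: migliormodulo} I would avoid a second self-similarity argument by invoking minimality of the modulus of continuity. Since $f_1$ is nondecreasing and continuous, a nondecreasing $g$ satisfies $|f_1(x)-f_1(y)|\le g(|x-y|)$ for all $x,y$ precisely when $g$ dominates the minimal modulus $\omega_{f_1}$ pointwise. As recalled in the Introduction, $\omega_{f_1}=f_1$; hence \eqref{i: migliormodulo} is \emph{equivalent} to $f_1(\delta)\le f_2(\delta)$ for all $\delta\in[0,1]$, which is exactly \eqref{i: dominataconcava}, already established. This completes the proof of the lemma.
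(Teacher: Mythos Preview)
Your proof is correct. The paper takes the opposite logical route: it first establishes \eqref{i: migliormodulo} by directly citing the $\alpha$-H\"older estimate for the Cantor function from \cite[Proposition~10.1]{DMRV}, and then obtains \eqref{i: dominataconcava} as the special case $y=0$. You instead prove \eqref{i: dominataconcava} from scratch via the self-similarity of $f_1$ and $f_2$ (the case split on the ternary location of the maximizer, with the delicate right-third estimate $(u+2)^\alpha\ge 1+u^\alpha$), and then recover \eqref{i: migliormodulo} from \eqref{i: dominataconcava} using the fact $\omega_{f_1}=f_1$ recalled in the Introduction. Your argument is longer but makes the domination $f_1\le f_2$ fully self-contained; on the other hand you still rely on a cited property of $f_1$ (namely $\omega_{f_1}=f_1$) to close \eqref{i: migliormodulo}, so neither approach is entirely reference-free. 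Conditions \eqref{c1}, \eqref{i: identity1}, \eqref{i: identity2} are handled identically in both proofs.
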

\begin{proof}
Condition \eqref{c1} is trivially true. From \cite[Proposition 10.1]{DMRV} it holds
	\[
	|f_1(x)-f_1(y)|\leq |x-y|^\alpha \qquad \forall x,y \in [0,1] \, ,
	\]
 that is \eqref{i: migliormodulo}. 
	Therefore, condition \eqref{i: dominataconcava} simply follows by taking $y=0$. 

Finally, conditions \eqref{i: identity1} and \eqref{i: identity2} are straightforward as $\alpha<1$ and we are working in the unit interval $[0,1]$.
\end{proof}

Upon suitably rearranging $f_2$, we can construct a function that satisfies (some of) the conditions of Lemma \ref{l: concave} from below. 

\begin{lemma}\label{l: convex}
The function $f_3\colon[0,1]\to[0,1]$, defined by
$$
f_3(x) = 1-f_2(1-x) \qquad \forall x \in [0,1] \, ,
$$
satisfies the following conditions:
	\begin{enumerate}
		\item $f_3(0)=0$ and $f_3(1)=1$;
		\item\label{i: monotoneconvex} $f_3(x)\leq f_1(x)$ for every $x\in [0,1]$;
		\item\label{i: identity3} $f_3(x)\leq I(x)$ for every $x\in [0,1]$.
	\end{enumerate}
\end{lemma}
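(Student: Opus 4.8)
The plan is to prove Lemma~\ref{l: convex} by exploiting the reflection symmetry that defines $f_3$, namely $f_3(x) = 1 - f_2(1-x)$. The key structural fact is that this transformation is precisely the point reflection of the graph of $f_2$ through the center $(\tfrac12,\tfrac12)$ of the unit square, which converts the concave function $f_2$ into a convex one lying below the diagonal. I will therefore translate each assertion about $f_3$ into the corresponding already-established property of $f_2$ from Lemma~\ref{l: concave}.

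First I would dispatch condition (1): substituting directly, $f_3(0) = 1 - f_2(1) = 1 - 1 = 0$ and $f_3(1) = 1 - f_2(0) = 1 - 0 = 1$, both using condition \eqref{c1} of Lemma~\ref{l: concave}. For condition \eqref{i: identity3}, I want $f_3(x) \leq I(x) = x$, which after unwinding the definition is equivalent to $1 - f_2(1-x) \leq x$, i.e.\ $1 - x \leq f_2(1-x)$. Setting $t = 1-x \in [0,1]$ this reads $t \leq f_2(t)$, which is exactly condition \eqref{i: identity1} of Lemma~\ref{l: concave}. So \eqref{i: identity3} follows immediately by the change of variable.

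The step I expect to require the most care is condition \eqref{i: monotoneconvex}, the inequality $f_3(x) \leq f_1(x)$, since it couples $f_3$ (hence $f_2$) with the Cantor function $f_1$. Unwinding the definition, $f_3(x) \leq f_1(x)$ is equivalent to $1 - f_2(1-x) \leq f_1(x)$, i.e.\ $1 - f_1(x) \leq f_2(1-x)$. The natural tool here is the symmetry of the Cantor function itself: it is a standard fact that $f_1$ satisfies $f_1(x) + f_1(1-x) = 1$ for all $x \in [0,1]$ (the Cantor function is symmetric about $(\tfrac12,\tfrac12)$). Using this, $1 - f_1(x) = f_1(1-x)$, so the desired inequality becomes $f_1(1-x) \leq f_2(1-x)$, which upon setting $t = 1-x$ is precisely condition \eqref{i: dominataconcava} of Lemma~\ref{l: concave}. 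Thus the whole lemma reduces, via the reflection $t = 1-x$, to the three inequalities already proved for $f_2$, the only external ingredient being the reflection symmetry $f_1(x) + f_1(1-x) = 1$ of the Cantor function, which I would either cite from \cite{DMRV} or note as a well-known elementary property.
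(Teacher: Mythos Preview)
Your proof is correct and follows essentially the same route as the paper: for each item you unwind the definition of $f_3$, apply the reflection $t=1-x$, and reduce to the corresponding inequality for $f_2$ from Lemma~\ref{l: concave}, invoking the symmetry $f_1(x)=1-f_1(1-x)$ (cited in the paper from \cite{DMRV}) for part \eqref{i: monotoneconvex} and the trivial symmetry of $I$ for part \eqref{i: identity3}.
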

\begin{proof}
	 From condition \eqref{c1} in Lemma \ref{l: concave}, it is clear that $f_3(0)=0$ and $f_3(1)=1$. By \cite[Proposition 4.12]{DMRV}, we have that the Cantor function enjoys the symmetry property $f_1(x)=1-f_1(1-x)$ for every $x\in [0,1]$, so that \eqref{i: monotoneconvex} is a direct consequence of condition \eqref{i: dominataconcava} in Lemma \ref{l: concave}. Because of the same property enjoyed by $ I $, condition \eqref{i: identity3} follows too.
\end{proof}

It is apparent that both $f_2$ and $f_3$ are increasing functions belonging to $ C^0([0,1]) \cap C^1((0,1))$. In particular, they are both absolutely continuous.

\smallskip 

We are now in a position to exhibit the function $f\colon [0,7]\to[0,7]$ claimed in the statement of Theorem \ref{t: mainthm}, defined as follows:
\[
f(x)=\begin{cases}
	f_3(x) \,\, &\mbox{if } x\in [0,1]\,;\\
	1+f_2(x-1)\,\, &\mbox{if } x\in (1,2]\,;\\
	2+f_1(x-2)\,\, &\mbox{if } x\in (2,3]\,;\\
	3+f_3(x-3)\,\, &\mbox{if } x\in (3,4]\,;\\
	4+f_2(x-4)\,\, &\mbox{if } x\in (4,5]\,;\\
	5+f_2(x-5)\,\, &\mbox{if } x\in (5,6]\,;\\
	6+f_2(x-6)\,\, &\mbox{if } x\in (6,7]\,.
	\end{cases}
\]
Note that $f$ is nondecreasing, continuous but \emph{not} absolutely continuous, since its restriction to the subinterval $[2,3]$ is equal to a translate of the Cantor function. We then define the function $g\colon [0,7]\to [0,7]$ by replacing the Cantor function with the identity map in such a subinterval:
\[
g(x)=\begin{cases}
	f_3(x) \,\, &\mbox{if } x\in [0,1]\,;\\
	1+f_2(x-1)\,\, &\mbox{if } x\in (1,2]\,;\\
	I(x)\,\, &\mbox{if } x\in (2,3]\,;\\
	3+f_3(x-3)\,\, &\mbox{if } x\in (3,4]\,;\\
	4+f_2(x-4)\,\, &\mbox{if } x\in (4,5]\,;\\
	5+f_2(x-5)\,\, &\mbox{if } x\in (5,6]\,;\\
	6+f_2(x-6)\,\, &\mbox{if } x\in (6,7]\,.
\end{cases}
\]
Clearly, $g$ is increasing, continuous and piecewise $C^1([0,7])$, thus absolutely continuous. Our next goal is to show that the modification in $[2,3]$ does not affect the modulus of continuity. For a (approximate) graph of the two functions, see Figure \ref{f: graphofg} below.

\medskip

\begin{lemma}\label{l: samemodulus}
	The above functions $f$ and $g$ have the same modulus of continuity.
\end{lemma}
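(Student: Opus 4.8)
The plan is to exploit that $f$ and $g$ coincide on $[0,2]\cup[3,7]$ and agree at the seam points, $f(2)=g(2)=2$ and $f(3)=g(3)=3$; thus $f-g$ is supported in $(2,3)$. Since both functions are nondecreasing and continuous, for every $\delta\in[0,7]$ we have $\omega_f(\delta)=\max_{x\in[0,7-\delta]}\big(f(x+\delta)-f(x)\big)$, and likewise for $g$. I would classify each admissible interval $[x,x+\delta]$ by its position relative to $[2,3]$: it may \emph{avoid} the interior $(2,3)$, it may \emph{contain} the whole of $[2,3]$, or it may \emph{partially overlap} $(2,3)$. If $[x,x+\delta]$ avoids $(2,3)$ then it lies in $[0,2]\cup[3,7]$, where $f\equiv g$, so the two increments coincide. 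If $[x,x+\delta]$ contains $[2,3]$ then $f(x+\delta)-f(x)=\big(f(x+\delta)-f(3)\big)+1+\big(f(2)-f(x)\big)$, and since $f=g$ on $[0,2]\cup[3,7]$ while both rise by exactly $1$ across $[2,3]$, again the increments coincide. Consequently it suffices to prove that, \emph{for each} of $f$ and $g$, the maximal increment over intervals of length $\delta$ is attained at some interval that either avoids or contains $[2,3]$; equivalently, that partially overlapping intervals never strictly exceed this maximum.

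The tool for the last step is the domination furnished by Lemmas~\ref{l: concave} and~\ref{l: convex}. By conditions~\eqref{i: migliormodulo} and~\eqref{i: identity2} of Lemma~\ref{l: concave}, on every subinterval of $[2,3]$ of length $r$ both $f$ and $g$ increase by at most $f_2(r)=r^{\alpha}$; moreover, combining~\eqref{i: dominataconcava} and~\eqref{i: identity1} with~\eqref{i: monotoneconvex} and~\eqref{i: identity3}, on $[2,3]$ the graphs of $f$ and $g$ are sandwiched between the convex profile $2+f_3(\,\cdot-2)$ and the concave profile $2+f_2(\,\cdot-2)$. Hence the part of a partially overlapping interval falling in $(2,3)$ contributes a rise of at most $r^{\alpha}$, while on the \emph{adjacent} pieces the function behaves like the gentle right tail of an $f_2$-slab on $[1,2]$ or the gentle left tail of an $f_3$-slab on $[3,4]$ (both of slope $\to\alpha<1$ at the seam). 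By contrast, at the doubly steep seams $x=1$ and $x=4$, where a convex $f_3$-piece meets a concave $f_2$-piece, a centred interval of length $\delta$ rises by $2f_2(\delta/2)=2^{1-\alpha}\delta^{\alpha}$; the recurring elementary inequality $t^{\alpha}+(1-t)^{\alpha}\geq 1$ on $[0,1]$ then shows that such a seam interval majorises any comparably placed partial overlap.

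To organise the comparison I would first dispose of $f$ by a clean monotonicity observation: since the Cantor function has vanishing derivative almost everywhere, on the range of $x$ for which $x+\delta\in(2,3)$ the increment $x\mapsto f(x+\delta)-f(x)$ is nonincreasing, while on the range for which $x\in(2,3)$ and $x+\delta>3$ it is nondecreasing. Thus any partially overlapping maximiser of $f$ can be slid, without decreasing its increment, until the overlapping endpoint reaches $2$ or $3$, producing an avoiding (or containing) interval, except when the sliding is blocked by the boundary of $[0,7]$, which happens only for the two extremal intervals $[0,\delta]$ with $\delta\in(2,3)$ and $[7-\delta,7]$ with $\delta\in(4,5)$. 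For $g$, where this monotonicity is unavailable, I would instead bound each partially overlapping increment directly, replacing the overlapping portion by a congruent portion placed at the seam $x=4$ or inside the triple slab $[4,7]$ and invoking the sandwich bounds together with $t^{\alpha}+(1-t)^{\alpha}\geq 1$.

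The hard part is precisely the regime of long partial overlaps, with $\delta$ close to (but below) $5$: here no interval avoiding $(2,3)$ has length $\delta$ at all, since the larger shared block $[3,7]$ has width only $4$, so the domination must be carried out against \emph{containing} intervals. This is where the elaborate arrangement of $f$ pays off: the convex slab $f_3$ on $[3,4]$ followed by the three concave slabs on $[4,7]$ guarantees that, for every such $\delta$, one can exhibit a containing interval whose endpoints simultaneously capture a steep ascent near $x=1$ and a steep $f_2$-start to the right of $x=4$, and whose rise therefore dominates that of any long partial overlap. Verifying this final majorisation is the only genuinely computational ingredient; everything else reduces to the two elementary inequalities displayed above.
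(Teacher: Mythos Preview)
Your overall reduction is sound and coincides with the paper's: since $f=g$ on $[0,7]\setminus(2,3)$ and both rise by exactly $1$ across $[2,3]$, it suffices to show that for each of $f$ and $g$ the maximum of $h(x+\delta)-h(x)$ is already attained with both endpoints in $[0,7]\setminus(2,3)$. The paper packages this as two symmetric claims and dispatches them by a short case analysis: for each subinterval containing the second endpoint $y$ it exhibits an explicit integer translate $(x_1,y_1)$ of $(x,y)$ landing in a region where the pointwise sandwiches $f_3\le f_1\le f_2$ and $f_3\le I\le f_2$ from Lemmas~\ref{l: concave} and~\ref{l: convex} immediately give the desired inequality.

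Your execution for $f$, however, has a genuine gap. The ``clean monotonicity observation'' is false: the Cantor function's vanishing derivative almost everywhere does \emph{not} make $x\mapsto f(x+\delta)-f(x)$ nonincreasing on $\{x:x+\delta\in(2,3)\}$, precisely because $x\mapsto f(x+\delta)$ inherits the failure of absolute continuity from $f_1$, and an a.e.\ nonpositive derivative says nothing about monotonicity for such functions. Concretely, take $\delta=2$, so the relevant range is $x\in(0,1)$; there $\varphi(x)=2+f_1(x)-f_3(x)$, which gives $\varphi(0)=2$ but $\varphi\!\left(\tfrac13\right)=2+\tfrac12-\bigl(1-(2/3)^\alpha\bigr)\approx 2.27$. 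The increment goes \emph{up} as you slide right, so the sliding argument cannot push partial overlaps to the boundary in the direction you claim. (This is, of course, exactly the phenomenon the paper is about.) The paper sidesteps this trap by never differentiating the Cantor piece: it replaces the $f_1$-contribution by an $f_2$-contribution via an explicit shift and then invokes $f_1\le f_2$ pointwise. You already have these sandwich bounds in hand; using them directly, as the paper does, both repairs the argument for $f$ and makes a separate, more laborious treatment of $g$ unnecessary.
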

\begin{proof}
	Since $f$ and $g$ only differ in $(2,3)$, it is enough to prove the following two claims:
	\begin{enumerate}[(i)]
		\item\label{i: modulusoff} Given any $x,y\in [0,7]$ with $x\in [2,3]$, there exist $x_1,y_1\in [0,7]\setminus(2,3)$ such that $|x-y|=|x_1-y_1|$ and $|f(x)-f(y)|\leq |f(x_1)-f(y_1)|$.
		\item\label{i: modulusofg} Given any $x,y\in [0,7]$ with $x\in [2,3]$, there exist $x_2,y_2\in [0,7]\setminus(2,3)$ such that $|x-y|=|x_2-y_2|$ and $|g(x)-g(y)|\leq |g(x_2)-g(y_2)|$.
	\end{enumerate}
	Let us start from \eqref{i: modulusoff} and fix $x \in [2,3]$. We consider the following seven cases according to the subinterval where $y$ falls, making use of Lemma \ref{l: concave} \eqref{i: dominataconcava}-\eqref{i: migliormodulo}  and Lemma \ref{l: convex} \eqref{i: monotoneconvex} several times:
	\begin{itemize}
		\item if $y\in [0,1]$, we can choose $x_1=x+3$ and $y_1=y+3$;
		\item if $y\in [1,2]$, we can choose $x_1=x-1$ and $y_1=y-1$;
		\item if $y\in [2,3]$, we can choose $x_1=1$ and $ y_1=1+|x-y| $;
		\item if $y\in [3,4]$, we can choose $x_1=x-2$ and $y_1=y-2$;
		\item if $y\in [4,5]$, we can choose $x_1=x+1$ and $y_1=y+1$;
		\item if $y\in [5,6]$, we can choose $x_1=x+1$ and $y_1=y+1$;
		\item if $y\in [6,7]$, we can choose $x_1=x-2$ and $y_1=y-2$. 
	\end{itemize}
	We have therefore proved claim \eqref{i: modulusoff} in all the cases. We finally observe that \eqref{i: modulusofg} follows in the same way as \eqref{i: modulusoff} upon using Lemma \ref{l: concave} \eqref{i: identity1}-\eqref{i: identity2} and Lemma \ref{l: convex} \eqref{i: identity3} in the place of Lemma \ref{l: concave} \eqref{i: dominataconcava}-\eqref{i: migliormodulo} and, respectively, Lemma \ref{l: convex} \eqref{i: monotoneconvex}.
\end{proof}

\begin{figure}[htbp]
\centering
\subfigure
{\includegraphics[scale=0.75]{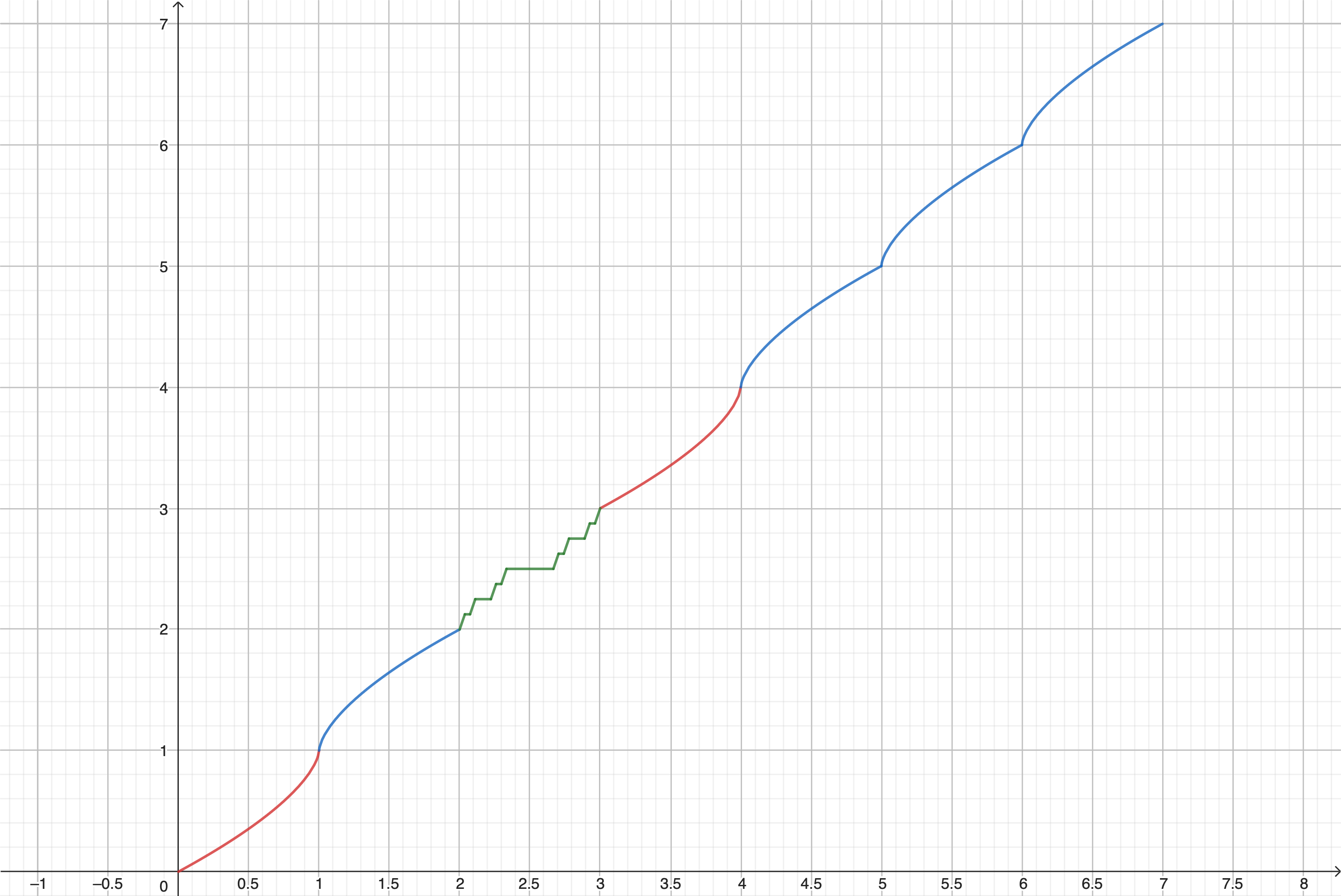}}
\hspace{5mm}
\subfigure
{\includegraphics[scale =0.75]{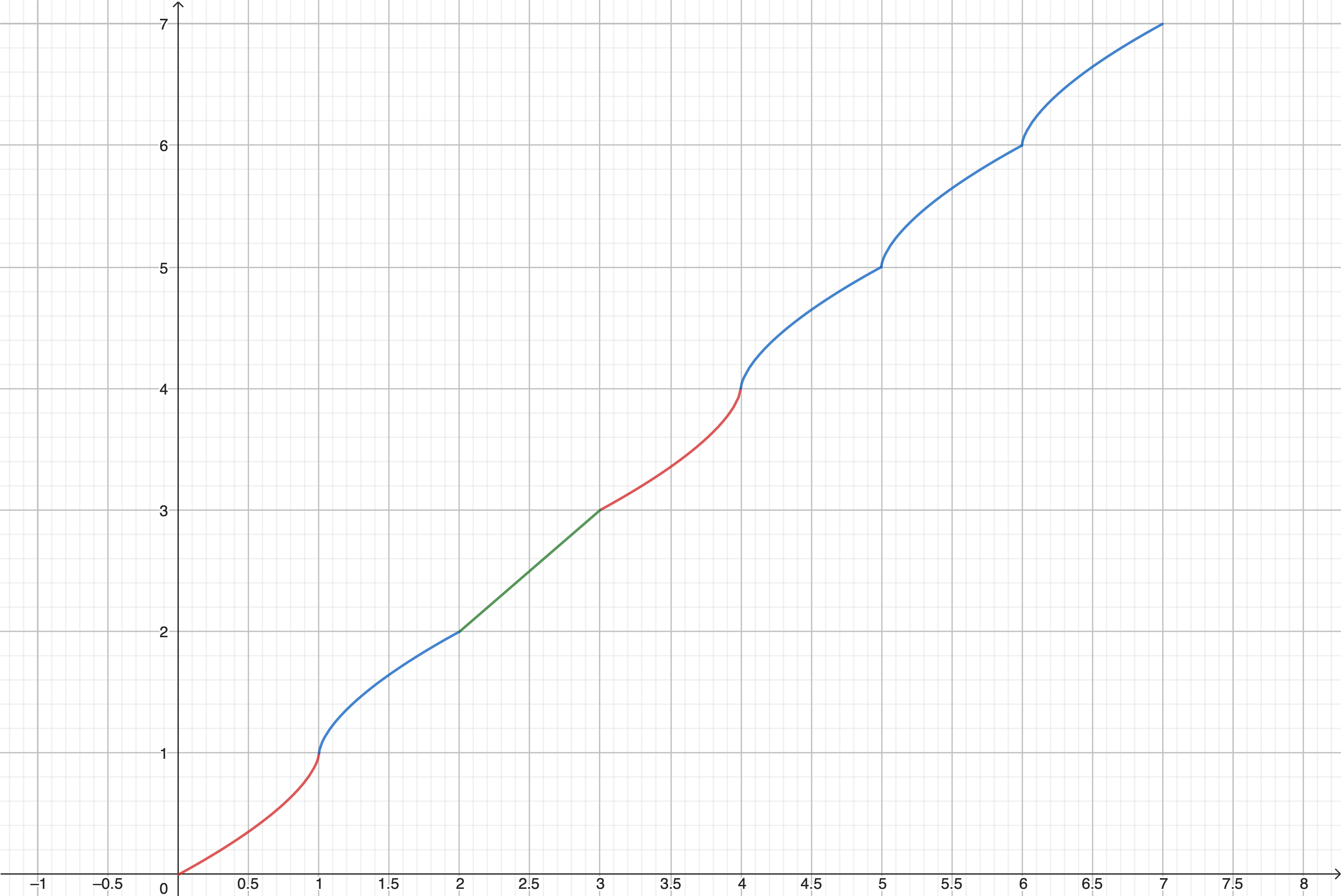}}
\caption{Approximate graphs of the functions $f$ (left) and $g$ (right).} \label{f: graphofg}
\end{figure} 

Hence, we can now concentrate on the regularity of $\omega_g$ (see its graph in Figure \ref{f: graphofomega} below), which is the last step we need to prove our main result.

\begin{lemma}\label{l: modulusofg}
	The function $\omega_g$ is absolutely continuous.
\end{lemma}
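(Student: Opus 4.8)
The plan is to use that, since $g$ is nondecreasing and continuous, its modulus of continuity admits the simpler representation
\[
\omega_g(\delta) = \max_{a \in [0,\,7-\delta]} \big( g(a+\delta) - g(a) \big),
\]
and to reduce this maximization to finitely many elementary subproblems. I would partition $[0,7]$ into the seven unit blocks on which $g$ is, up to a translation, one of the three \emph{explicit} functions $f_2$, $f_3$ or $I$. For each ordered pair of blocks $(i,j)$ with $i \le j$ I set
\[
M_{ij}(\delta) = \max\big\{\, g(b) - g(a) : a \in \text{block } i,\ b \in \text{block } j,\ b-a=\delta \,\big\},
\]
defined on the (interval) set of $\delta$ for which the constraint is feasible. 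Since any optimal pair $(a,b)$ with $b-a=\delta$ has $a$ in some block $i$ and $b$ in some block $j\ge i$, one gets $\omega_g(\delta) = \max_{i \le j} M_{ij}(\delta)$, a maximum over finitely many functions. On the compact interval $[0,7]$ the maximum of finitely many absolutely continuous functions is again absolutely continuous (via $\max(u,v)=\tfrac12(u+v+|u-v|)$ and stability of absolute continuity under $|\cdot|$), so the whole statement reduces to showing that each $M_{ij}$ is absolutely continuous; the finitely many values of $\delta$ at which the family of feasible pairs changes split $[0,7]$ into subintervals on which this reduction is literal.

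Each $M_{ij}$ is then obtained from a one-parameter optimization governed entirely by the shape of the three building blocks: $f_2$ is increasing and \emph{concave} with $f_2(0)=0$ (hence subadditive), $f_3(x)=1-f_2(1-x)$ is increasing and \emph{convex}, and $I$ is affine. When both endpoints stay inside a single concave (resp. convex) block, the increment $g(a+\delta)-g(a)$ is monotone in $a$, so the maximum sits at an endpoint and equals $f_2(\delta)=\delta^\alpha$; for the linear block it equals $\delta$. For genuinely cross-block pairs the optimum is attained either at a boundary, producing an affine function of $\delta$ or a term of the form $c\,(\beta\delta+\gamma)^\alpha$, or at an interior critical point, producing again an affine function of $\delta$. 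In every case $M_{ij}$ is a finite combination of affine functions and functions $\delta \mapsto c\,(\beta\delta+\gamma)^\alpha$ with $\alpha\in(0,1)$; as $\delta^{\alpha-1}\in L^1$ near the origin, each such term, and hence each $M_{ij}$, is absolutely continuous.

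The hard part will be the interaction at the two \emph{doubly steep} junctions $x=1$ and $x=4$, where a convex block ($f_3$, with $g'\to\infty$ from the left) meets a concave block ($f_2$, with $g'\to\infty$ from the right); this is the only place where absolute continuity could a priori break down. Writing $a=1-s$, $b=1+t$ with $s+t=\delta$, the symmetry $f_3(1-s)=1-f_2(s)$ turns the straddling increment into $f_2(s)+f_2(t)$, whose maximum under $s+t=\delta$ is attained, by concavity, at the \emph{symmetric} split $s=t=\delta/2$ and equals $2f_2(\delta/2)=2^{1-\alpha}\delta^\alpha$. This symmetric splitting is exactly what rescues absolute continuity: the two one-sided singularities are never swept individually but only through the combination $2^{1-\alpha}\delta^\alpha$, whose derivative $\sim\delta^{\alpha-1}$ is integrable. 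The main obstacle is therefore the bookkeeping over all block-pairs, carried out carefully enough to confirm that no configuration produces a worse-than-$\delta^\alpha$ singularity and to identify which $M_{ij}$ realizes the global maximum on each subinterval of $\delta$. As a consistency check one may use the envelope identity $\omega_g'(\delta)=g'(b^\ast(\delta))$, where $b^\ast$ is the right endpoint of the optimal increment: along every branch $b^\ast$ recedes from the singular point at a fixed positive rate, keeping $g'(b^\ast(\delta))$ integrable in $\delta$ and thereby confirming absolute continuity of $\omega_g$.
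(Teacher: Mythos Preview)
Your approach is correct and takes a genuinely different route from the paper's. The paper proceeds by \emph{explicitly computing} $\omega_g$: it lists, for each $\delta$, the finite set $A_\delta$ of endpoints/non-differentiability points and the set $C_\delta$ of interior critical points of $x\mapsto g(x+\delta)-g(x)$, evaluates the competing candidates, determines which one wins on each subinterval, and ends with a closed piecewise formula for $\omega_g$ (built from the functions $\delta\mapsto k+2f_2((\delta-k)/2)$) that is visibly piecewise $C^1$. Your argument instead exploits the closure of absolute continuity under finite maxima: you decompose $\omega_g=\max_{i\le j} M_{ij}$ over block pairs and check only that each $M_{ij}$ is absolutely continuous, without ever deciding which $M_{ij}$ realizes the maximum. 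This saves the comparison step that occupies most of the paper's proof (and the identification of the threshold $\delta^*$), at the cost of the easy lemma $\max(u,v)=\tfrac12(u+v+|u-v|)$. Both proofs hinge on the same key computation at the convex--concave junctions, namely that the straddling increment equals $f_2(s)+f_2(t)$ and is maximized at the symmetric split, yielding $2f_2(\delta/2)=2^{1-\alpha}\delta^\alpha$; this is precisely the mechanism that prevents the two one-sided blow-ups of $g'$ from producing a non-integrable singularity in $\omega_g'$. One small imprecision in your write-up: interior critical points for \emph{generic} cross-block pairs do not always yield an affine function of $\delta$ (nor are they always maxima---for a concave block followed by a convex one the critical point is a minimum and the optimum sits at the boundary), but since you treat the convex--concave case separately and the remaining cases land on boundary values of the form $c+(\beta\delta+\gamma)^\alpha$, the conclusion is unaffected.
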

\begin{proof}
In order to compute the value $\omega_g(\delta)$ for every fixed $\delta\in (0,7)$ (clearly $ \omega_g(0)=0 $ and $ \omega_g(7)=7 $), we have to maximize the function $\varphi(x)=g(x+\delta)-g(x)$ over $x\in[0,7-\delta]$. Hence, we need to find all $x\in (0,7-\delta)$ such that $\varphi$ is derivable at $x$ and $g'(x+\delta)=g'(x)$, then compare the corresponding values of $\varphi$ with those associated to the points where $ \varphi $ is non-derivable or to the endpoints of the interval. We call the latter set $ A_\delta $, namely 
$$
A_\delta =  \{0,1,2,3,4,5,6\} \cup \{ (1-\delta)^+,  (2 -\delta)^+ ,  (3- \delta)^+,  (4-\delta)^+,  (5-\delta)^+,  (6-\delta)^+,  7-\delta  \}  \, .
$$
Using the basic symmetry and concavity properties of the functions $f_2$ and $f_3$, along with Lemma \ref{l: concave}, it is not difficult to check that
$$
\max_{x\in A_\delta} \varphi(x) = \lfloor \delta \rfloor + f_2(\delta - \lfloor \delta \rfloor) \, .
$$
Let us deal with the critical points of $\varphi$. Thanks to Lemma \ref{l: samemodulus}, among such points, we can ignore those for which either $ x \in (2,3) $ or $ x+\delta \in (2,3) $. Therefore, we must find every $ x \in (0,2) \cup (4,7) \setminus A_\delta $ such that also $ x + \delta \in (0,2) \cup (4,7) \setminus A_\delta $ and 
$$
g'(x+\delta)=g'(x) \, .
$$
Let us denote this (reduced) critical set by $ C_\delta $. Owing to the fact that $ f_2' $ is strictly decreasing, and recalling that $ f_3(x)=1-f_2(1-x) $, we obtain:
$$
C_\delta =
\begin{cases}
\left\{ 1-\frac \delta 2 \, , 4-\frac \delta 2  \right\}  \,   & \text{if } \delta \in (0,1) \, ; \\
\left\{ 1-\frac \delta 2 \, ,  4-\frac \delta 2  \right\} \cup (4,5) \cup (5,6) \,   & \text{if } \delta = 1 \, ; 
\\
\left\{ 1-\frac \delta 2 \, , 2-\frac {\delta-1} 2 \, ,  4-\frac \delta 2 \, , 4-\frac {\delta-1} 2  \right\}  \,   & \text{if } \delta \in (1,2) \, ; 
\\
\left\{ 2-\frac {\delta-1} 2 \, , 4-\frac {\delta-1} 2  \right\}  \cup (4,5) \,   & \text{if } \delta = 2 \, ; 
\\
\left\{ 2-\frac {\delta-1} 2 \, , 4-\frac {\delta-1} 2 \, , 4 - \frac{\delta-2}{2} \right\} \,   & \text{if } \delta \in (2,3) \, ; 
\\
\left\{4-\frac {\delta-2} 2  \right\}  \cup (0,1) \cup (1,2) \,   & \text{if } \delta = 3 \, ; 
\\
\left\{ 1 - \frac{\delta-3}2 \, ,  4-\frac {\delta-2} 2  \right\}  \,   & \text{if } \delta \in (3,4) \, ; 
\\
\left\{ 1 - \frac{\delta-3}2 \right\} \cup (1,2) \,   & \text{if } \delta = 4 \, ; 
\\
\left\{ 1 - \frac{\delta-3}2 \, , 1 - \frac{\delta-4}2  \right\} \,   & \text{if } \delta \in (4,5) \, ; 
\\
\left\{ 1 - \frac{\delta-4}2 \right\} \cup (1,2) \,   & \text{if } \delta =5 \, ; 
\\
\left\{ 1 - \frac{\delta-4}2 \, ,  1 - \frac{\delta-5}2  \right\}  \,   & \text{if } \delta \in (5,6) \, ; 
\\
\left\{ 1 - \frac{\delta-5}2 \right\} \,   & \text{if } \delta \in [6,7) \, .
\end{cases}
$$
Accordingly, it is readily seen that
$$
\max_{x\in C_\delta} \varphi(x) =
\begin{cases}
2 \, f_2\!\left(\frac \delta 2\right)  \,   & \text{if } \delta \in (0,1] \, ; 
\\
\max \left\{ 2 \, f_2\!\left(\frac \delta 2\right) \! , 1+ 2 \, f_2\!\left(\frac {\delta-1} 2\right) \right\}  \,   & \text{if } \delta \in (1,2] \, ; 
\\
\max \left\{ 1+ 2 \, f_2\!\left(\frac {\delta-1} 2\right) \! , 2+ 2 \, f_2\!\left(\frac {\delta-2} 2\right) \right\}  \,   & \text{if } \delta \in (2,3] \, ; 
\\
\max \left\{ 3+ 2 \, f_2\!\left(\frac {\delta-3} 2\right) \! , 2+ 2 \, f_2\!\left(\frac {\delta-2} 2\right) \right\}  \,   & \text{if } \delta \in (3,4] \, ; 
\\
\max \left\{ 3+ 2 \, f_2\!\left(\frac {\delta-3} 2\right) \! , 4+ 2 \, f_2\!\left(\frac {\delta-4} 2\right) \right\}  \,   & \text{if } \delta \in (4,5] \, ; 
\\
\max \left\{ 4+ 2 \, f_2\!\left(\frac {\delta-4} 2\right) \! , 5+ 2 \, f_2\!\left(\frac {\delta-5} 2\right) \right\}  \,  & \text{if } \delta \in (5,6] \, ; 
\\
5+ 2 \, f_2\!\left(\frac {\delta-5} 2\right) \,   & \text{if } \delta \in (6,7) \, .
\end{cases}
$$
In order to determine which is the largest value between those appearing in the above maxima, it is convenient to introduce the auxiliary function 
$$
\psi(\delta) = 2\,f_2\!\left(\tfrac{\delta}{2}\right)-1-2\,f_2\!\left(\tfrac{\delta-1}{2}\right) \qquad \forall \delta \in [1,2]
$$
and observe that, due to the strict concavity of $f_2$, there exists a unique $ \delta^* \in (0,1) $ (not explicit) such that
$$
\psi(\delta)>0 \quad \forall \delta \in [1,1+\delta^*) \qquad \text{and} \qquad \psi(\delta)<0 \quad \forall \delta \in (1+\delta^*,2] \, .
$$
Still by concavity, an elementary comparison reveals that $ \max_{A_\delta} \varphi  \leq \max_{C_\delta} \varphi $. As a result, we finally deduce that
\[
\omega_g(\delta)=\begin{cases}
2\,f_2\!\left(\frac{\delta}{2}\right) \, &\mbox{if } \delta\in [0,1+\delta^*] \, ; \\
1+2\,f_2\!\left (\frac{\delta-1}{2}\right)\, &\mbox{if } \delta\in (1+\delta^*,2+\delta^*] \, ;\\
2+2\,f_2\!\left(\frac{\delta-2}{2}\right) \, &\mbox{if } \delta\in (2+\delta^*,3+\delta^*] \, ;\\
3+2\,f_2\!\left(\frac{\delta-3}{2}\right)\, &\mbox{if } \delta\in (3+\delta^*,4+\delta^*]\,;\\
4+2\,f_2\!\left(\frac{\delta-4}{2}\right)\, &\mbox{if } \delta\in (4+\delta^*,5+\delta^*]\,;\\
5+2\,f_2\!\left(\frac{\delta-5}{2}\right) \, &\mbox{if } \delta\in (5+\delta^*,7] \, ,
\end{cases}
\]
which shows that $ \omega_g $ is piecewise $C^1$ and, therefore, absolutely continuous.

\end{proof}
\begin{figure}[htbp]
\includegraphics[scale=1]{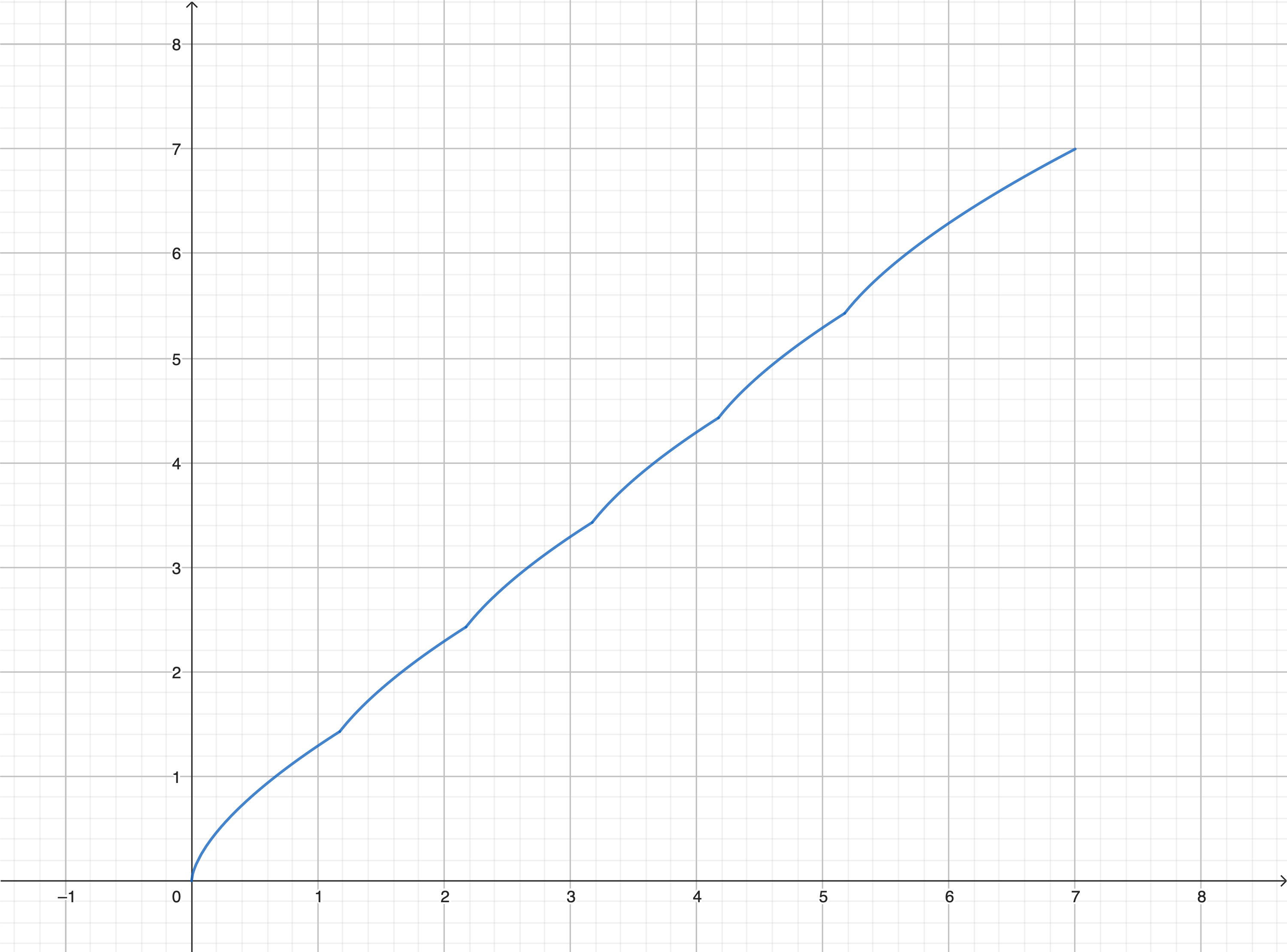}
	\caption{Approximate graph of $\omega_g = \omega_f $.}\label{f: graphofomega}
\end{figure}

\begin{proof}[Proof of Theorem \ref{t: mainthm}]
	It is an immediate consequence of Lemma \ref{l: samemodulus} and Lemma \ref{l: modulusofg}.
\end{proof}

\begin{example}\label{e: nonmonotona}
	The function $h\colon[0,2]\to[0,2]$, defined by
	\[
	h(x)=\begin{cases}
		f_2(x)\, \, & \mbox{if }  x \in [0,1] \, ;\\
		f_1(2-x) \, \, &\mbox{if } x \in (1,2] \, ,
		\end{cases}
	\]
is an elementary example of a non-monotone, continuous, but not absolutely continuous function whose modulus of continuity is absolutely continuous (see Figure \ref{f: graphofh}). Indeed, by Lemma \ref{l: concave} \eqref{i: migliormodulo}, it is readily seen that 
\[
\omega_h(\delta)=\begin{cases}
		f_2(\delta)\,\, &\mbox{if } \delta \in [0,1] \, ;\\
	1 \, \, &\mbox{if }  \delta \in (1,2] \, ,
	\end{cases}
\]
which is absolutely continuous.
\end{example}
	\begin{figure}[htbp]
	\includegraphics[scale=4]{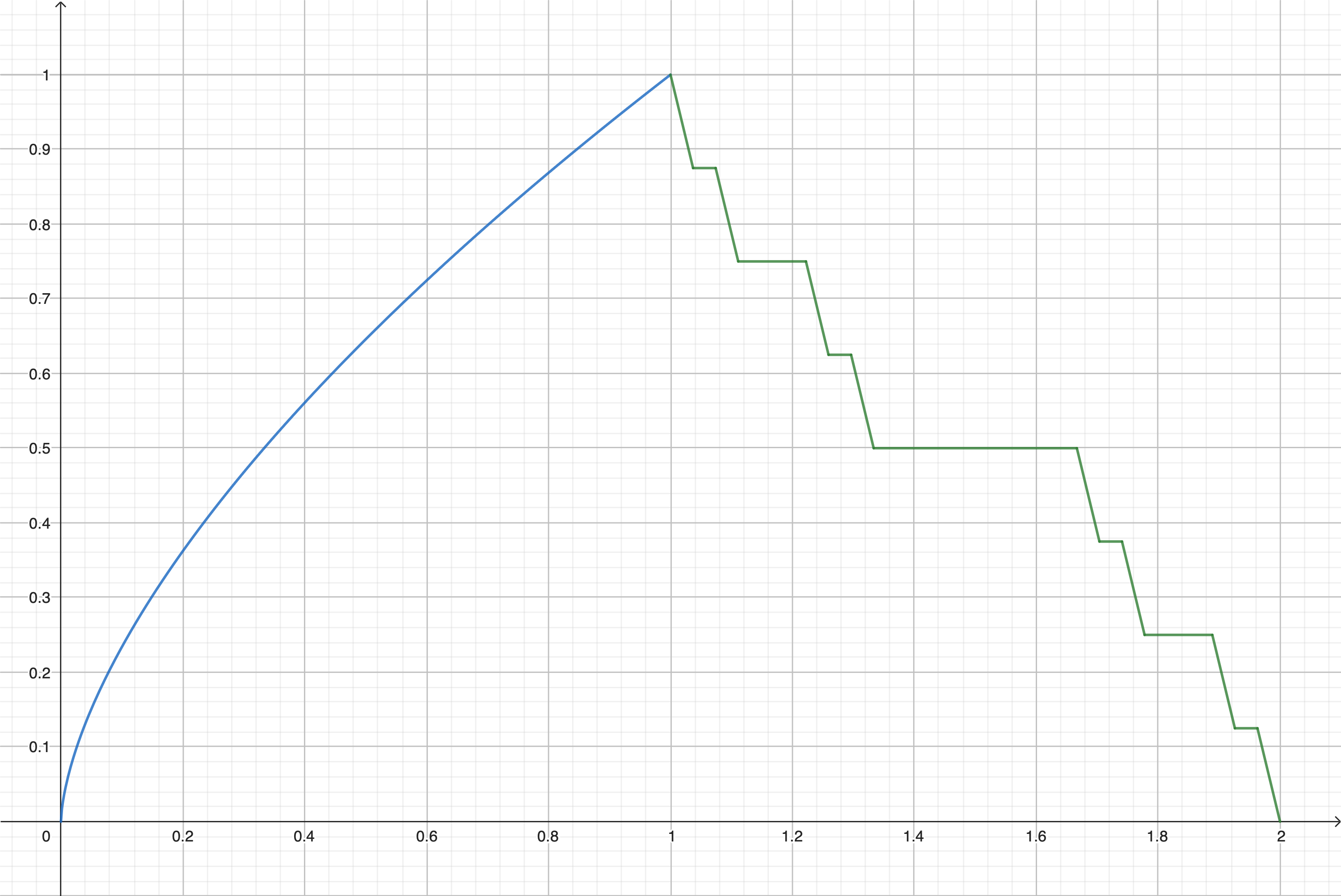}
	\caption{Approximate graph of the function $h$.}\label{f: graphofh}
\end{figure}

We conclude by stating a natural problem that, to the best of our knowledge, seems to be open.

\begin{problem}
	Let $f\colon [0,1]\to[0,1]$ be a (possibly nondecreasing) absolutely continuous function. Is $\omega_f$ absolutely continuous?
\end{problem}

\smallskip  

\section*{Statements \& Declarations}

\textbf{Funding.} The research of the authors has been partially supported by the GNAMPA group (INdAM -- Istituto Nazionale di Alta Matematica, Italy).

\smallskip

\textbf{Competing interests.} The authors have no relevant financial or non-financial interests to disclose.

\smallskip

\textbf{Data availability.} Not applicable.


\end{document}